\newtheorem{thm}[subsection]{Theorem}
\newtheorem{pro}[subsection]{Proposition}
\numberwithin{equation}{section} \setcounter{tocdepth}{1}
\newcommand{\s}{{\sigma}}
\newcommand{\bea}{\begin{eqnarray}}
\newcommand{\eea}{\end{eqnarray}}
\newcommand{\Z}{\mathbb{Z}}
\newcommand{\R}{\mathbb{R}}
\begin{document}
\title[Phase transitions for a model on the Cayley tree]{Phase transitions for a model with uncountable spin space on the Cayley tree: the general case}

\author{Golibjon Botirov}
\author{Benedikt Jahnel}
\address[Benedikt Jahnel]{Weierstrass Institute Berlin, Mohrenstr. 39, 10117 Berlin, Germany, \texttt{https://www.wias-berlin.de/people/jahnel/}}
\email{Benedikt.Jahnel@wias-berlin.de}

\address[Golibjon Botirov]{National University of Uzbekistan, University Street 4, 100174 Tashkent, Uzbekistan,} \email{botirovg@yandex.ru}

\begin{abstract}
In this paper we complete the analysis of a statistical mechanics model on Cayley trees of any degree, started in~\cite{ehr1, re, erb, bb, b1}. The potential is of nearest-neighbor type and the local state space is compact but uncountable. Based on the system parameters we prove existence of a critical value $\theta_{\rm c}$ such that for $\theta\le \theta_{\rm c}$ there is a unique translation-invariant splitting Gibbs measure. For $\theta_{\rm c}<\theta$ there is a phase transition with exactly three translation-invariant splitting Gibbs measures. The proof rests on an analysis of fixed points of an associated non-linear Hammerstein integral operator for the boundary laws.
\end{abstract}
\maketitle

{\bf Mathematics Subject Classifications (2010).} 82B05, 82B20
(primary); 60K35 (secondary)

{\bf{Key words.}} Cayley trees, Hammerstein operators, splitting Gibbs measures,
phase transitions.

\section{Introduction} \label{sec:intro}
In the present note we complete a line of research about the phase-transition behaviour of  a nearest-neighbor model on Cayley trees with arbitrary degree $k\ge 2$. As first described in~\cite{re}, for a given consistent family of finite-volume Gibbs measures, the existence and multiplicity of a certain class of infinite-volume measures which are consistent with the prescribed finite-volume Gibbs measures, can be reduced to the analysis of fixed points of some non-linear integral equation of Hammerstein type. Every positive solution of the fixed point equation here corresponds to a measures which is called a splitting Gibbs measure. Every splitting Gibbs measure is also a Gibbs measure in the sense of the DLR formalism; see~\cite{B}. This approach has been successfully applied in the analysis of a variety of different models on Cayley trees with respect to their phase-transition properties; see~\cite{rb} for a comprehensive overview. In particular, starting with~\cite{ehr1}, a phase-transition of multiple splitting Gibbs measures has been detected in a model with uncountable local state space $[0,1]$ and nearest-neighbor interactions. This has motivated the subsequent analysis in~\cite{erb, bb, b1}, to further understand critical behavior of this model for all degrees of the underlying tree, where also new parameters are introduced. It is the purpose of this note to complete the analysis of this model.

\medskip
For nearest neighbors $x,y$ on the {\it Cayley tree} $\Gamma^k$ with degree $k \geq 2$ with local states $\s(x),\s(y)\in[0,1]$, we consider the potential
\begin{equation}\label{Model}
\xi_{\s(x),\s(y)}=\log\left(1+\theta \sqrt[2m+1]{4(\s(x)-\frac{1}{2})(\s(y)-\frac{1}{2})}\right)
\end{equation}
where $m\in\mathbb N\cup\{0\}$, $0 \leq \theta <1$ are the system parameters. It can be interpreted as a certain symmetric pair-interaction with values in $[\log(1-\theta),\log(1+\theta)]$, admitting two distinct ground states given by the all-$0$ and the all-$1$ configuration.  The main result is the existence of a sharp threshold
$$\theta_{\rm c}=\frac{2m+3}{k(2m+1)}$$
such that if $\theta_{\rm c}< \theta <1$, there are exactly three translation-invariant splitting Gibbs measures and otherwise there is only one.

%
%
%
%
%

\section{Setup}\label{Setup}
\subsection{Gibbs measures on Cayley trees}

The {\it Cayley tree} $\Gamma^k$ of order $k \geq 1$ is an infinite tree, i.e., a graph
without cycles, such that exactly $k+1$ edges originate from each vertex. Let $\Gamma^k=(V,L)$ where
$V$ is the set of {\it vertices} and $L$ is a symmetric subset of $V \times V$, called the {\it edge set}.
The word "symmetric" means that $(x, y) \in L$ iff $(y, x) \in L$. Here, $x$ and $y$ are called the {\it endpoints} of the edge $\langle x, y\rangle$. Two vertices $x$ and $y$ are called {\it nearest neighbors}
if there exists an edge $l \in L$ connecting them and we denote $l=\langle x,y\rangle$. 
For a fixed $x^0 \in V$, called the {\it root}, we defines $n$-spheres and $n$-disks in the graph distance $d(x,y)$ by  
$$W_n=\{x \in V| d(x,x^0)=n\}, \ \ \ \ V_n=\bigcup \limits_{i=0}^n W_i$$
and denote for any $x \in W_n$ the set of {\it direct successors} of $x$ by
$$S(x)=\{y \in W_{n+1}: d(x,y)=1\}.$$

For $A \subset V$ let $\Omega_A=[0,1]^A$ denote the set of all configurations $\s_A$ on $A$. In particular, a configuration $\sigma$ on $V$ is then defined as a function $V\ni x \mapsto \sigma(x) \in [0,1]$.
According to the usual setup for Gibbs measure, we consider a (formal) Hamiltonian of the form
\begin{equation}\label{m}
H(\sigma)=-\sum \limits_{\langle x,y\rangle \in L}\xi_{\sigma(x),\sigma(y)}, 
\end{equation}
where $\xi: (u,v) \in [0,1]^2 \mapsto \xi_{u,v} \in\R$ is the interaction~\eqref{Model} which assigns energy only to neighboring sites. Since $\xi$ does not depend on the locations $x$ and $y$, $H$ is invariant under tree translations. 
Let $\lambda$ be the Lebesgue measure on $[0,1]$ then, on the set of all configurations on $A$ the a priori measure $\lambda_A$ is introduced as the $|A|$-fold product of the measure $\lambda$. Here and in the sequel, $|A|$ denotes the cardinality of $A$. We equip $\Omega=\Omega_V$ with the standard sigma-algebra $\mathcal{B}$ generated by the cylindrical subsets. A probability measure $\mu$ on $(\Omega, \mathcal{B})$ is called a {\it Gibbs measure} (with Hamiltonian $H$) if it satisfies the DLR equation. That is, for any $n=1,2,...$ and bounded measurable test function $f$, we have that
\begin{equation}\label{DLR}
\int\mu(d\s)f(\s)=\int \mu(d \s)\int\gamma_{V_n}(d\tilde\s_{V_n}|\s_{W_{n+1}})f(\tilde\s_{V_n}\s_{\Gamma^k\setminus V_n}),
\end{equation}
where $\gamma_{V_n}(d \sigma_{V_n}|\s_{\Gamma^k\setminus V_n})$ is the Gibbsian specification
$$\gamma_{V_n}(d\tilde\sigma_{V_n}|\s_{\Gamma^k\setminus V_n})=\frac{1}{Z_{V_n}(\s_{W_{n+1}})}e^{-\beta H(\tilde\sigma_{V_n}\s_{W_{n+1}})}\lambda_{V_n}(d\tilde\s_{V_n}),$$
with normalization $Z_{V_n}$ and temperature parameter $\beta \geq 0$. Such a specification is also sometimes referred to as a Markov specification; see~\cite{Ge11}.

\subsection{Representation via Hammerstein operators}
A subset of the infinite-volume Gibbs measures defined via the DLR equation~\eqref{DLR}, called the {\it splitting Gibbs measures} or {\it Markov chains}, can be represented in terms of the fixed points of some nonlinear integral operator of Hammerstein type; see~\cite{re} for details. More precisely, for every $k\in\mathbb{N}$ consider the integral operator
$H_{k}$ acting on the cone $C^{+}[0,1]=\{f\in C[0,1]: f(x)\geq 0\}$ given by
\begin{equation}\label{Ham_k}
\begin{split}
(H_{k}f)(t)=\int^{1}_{0}K(t,u)f^{k}(u)du.
\end{split}
\end{equation}
Then, the translation-invariant splitting Gibbs measures for the Hamiltonian~\eqref{m} correspond to fixed points of $H_k$ with $K(t,u)=\exp(\beta\xi_{t,u})$, often called {\it boundary laws}. Note that $H_k$ in general might generate ill-posed problems; see~\cite{Kr64,KrZa84}.

\section{Main results}
The main result of this note is the following characterization of phase-transition regimes of the model~\eqref{Model} with $\beta=1$.
\begin{thm}\label{t}
For all $n\in\mathbb N\cup\{0\}$ and $k\ge 2$ let $\theta_{\rm c}=(2n+3)/(k(2n+1))$, then the model~\eqref{Model} has
\begin{enumerate}
\item a unique translation-invariant splitting Gibbs measure if $0 \leq \theta \leq \theta_{\rm c}$ and
\item exactly three translation-invariant splitting Gibbs measures if $\theta_{\rm c} < \theta <1$.
\end{enumerate}
\end{thm}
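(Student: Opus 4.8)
The plan is to exploit the \emph{degenerate} (finite-rank) structure of the kernel. Writing $\alpha=2m+1$ and $\phi(t)=\sqrt[2m+1]{2t-1}$ (the real odd root, so $\phi\colon[0,1]\to[-1,1]$ with $\phi(1-t)=-\phi(t)$), the kernel factorizes as
\begin{equation*}
K(t,u)=\exp(\xi_{t,u})=1+\theta\,\phi(t)\phi(u).
\end{equation*}
Consequently every fixed point $f\in C^{+}[0,1]$ of $H_k$ must have the form $f=a+b\phi$ with $a=\int_0^1 f^k$ and $b=\theta\int_0^1\phi f^k$. Discarding the trivial $f\equiv 0$ we have $a>0$, and positivity of $f$ on $[0,1]$ forces $a\ge|b|$. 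First I would set $s=b/a\in[-1,1]$ and $f=a(1+s\phi)$; dividing the two defining relations removes $a$ and reduces the entire fixed-point problem to the scalar equation
\begin{equation*}
s\,D(s)=\theta\,N(s),\qquad D(s)=\int_0^1(1+s\phi)^k,\quad N(s)=\int_0^1\phi\,(1+s\phi)^k,
\end{equation*}
after which $a=D(s)^{-1/(k-1)}>0$ is recovered uniquely. Thus translation-invariant splitting Gibbs measures correspond bijectively to solutions $s\in(-1,1)$ of this scalar equation.

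Second, I would turn this into a polynomial count. Using the moments $M_j=\int_0^1\phi(u)^j\,du$, which evaluate to $M_j=\alpha/(\alpha+j)$ for even $j$ and $M_j=0$ for odd $j$, the equation becomes $s\,Q(s^2)=0$ with
\begin{equation*}
Q(x)=\sum_{\ell\ge0}c_\ell\,x^\ell,\qquad c_\ell=\binom{k}{2\ell}M_{2\ell}-\theta\binom{k}{2\ell+1}M_{2\ell+2}.
\end{equation*}
Hence $s=0$ (the constant boundary law $f\equiv1$) is always a solution, while the remaining solutions come in reflection pairs $\pm\sqrt{x^{\ast}}$ with $x^{\ast}\in(0,1)$ a root of $Q$. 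The proof is therefore reduced to counting the roots of $Q$ in $(0,1)$: none gives uniqueness, exactly one gives precisely three measures.

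Third comes the sign analysis, which is the heart of the argument. A direct computation gives $c_0=1-\theta/\theta_{\rm c}$, and more generally $c_\ell>0\Leftrightarrow\theta<\theta_\ell$, where
\begin{equation*}
\theta_\ell=\frac{(2\ell+1)(\alpha+2\ell+2)}{(k-2\ell)(\alpha+2\ell)},\qquad\theta_0=\theta_{\rm c}.
\end{equation*}
The key lemma is that $\ell\mapsto\theta_\ell$ is strictly increasing on the relevant range; I would prove this by showing $(\log\theta_\ell)'>0$, which reduces to
\begin{equation*}
\frac{1}{2\ell+1}+\frac{1}{\alpha+2\ell+2}+\frac{1}{k-2\ell}>\frac{1}{\alpha+2\ell},
\end{equation*}
and this holds because $\alpha\ge1$ already gives $1/(2\ell+1)\ge1/(\alpha+2\ell)$. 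Together with the elementary facts that the leading coefficient of $Q$ is positive for $\theta<1$ (using $M_{k+1}<M_{k-1}$ in the odd-$k$ case and $M_k>0$ in the even-$k$ case) and that $Q(1)=D(1)\bigl(1-\theta N(1)/D(1)\bigr)>0$ since $N(1)/D(1)=\int_0^1\phi(1+\phi)^k/\int_0^1(1+\phi)^k<1$, this pins down the sign pattern of $(c_\ell)_\ell$.

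Finally I would conclude by Descartes' rule of signs. For $0\le\theta\le\theta_{\rm c}=\theta_0=\min_\ell\theta_\ell$ every $c_\ell\ge0$, so $Q>0$ on $(0,1]$ and $s=0$ is the only solution: a unique splitting Gibbs measure. For $\theta_{\rm c}<\theta<1$ the monotonicity of $\theta_\ell$ forces $(c_\ell)_\ell$ to be a block of negatives followed by a block of positives, i.e.\ exactly one sign change, so $Q$ has exactly one positive root; since $Q(0)=c_0<0<Q(1)$ this root lies in $(0,1)$, yielding the pair $\pm s^{\ast}$ and hence exactly three measures. The only genuine obstacle is establishing the monotonicity of $\theta_\ell$ (together with the bijection between solutions and measures): once the kernel is recognized as constant-plus-rank-one and the problem is linearized into the polynomial $Q$, the counting is governed entirely by this monotonicity, which in turn rests on the trivial bound $\alpha\ge1$.
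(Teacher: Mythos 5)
Your proposal is correct and follows essentially the same route as the paper: the rank-two kernel reduces the problem to a two-dimensional fixed-point equation, the ratio substitution turns it into an odd polynomial whose positive roots are counted by Descartes' rule of signs via the monotonicity of the thresholds $\theta_\ell$ (the paper's $\theta_{k,i}$), and positivity of the boundary law is verified by evaluating the polynomial at the edge of the positivity region. Your normalization $\phi\colon[0,1]\to[-1,1]$, the bound $1/(2\ell+1)\ge 1/(\alpha+2\ell)$ for monotonicity, and the comparison $D(1)>N(1)$ merely streamline the paper's explicit derivative computation and its alternating-sum identity $\sum_{i=0}^k(-1)^i\binom{k}{i}\frac{1}{m+1+i}=\frac{k!\,m!}{(m+k+1)!}$, while working with positive solutions from the outset lets you bypass the paper's even/odd-$k$ case distinction and the bookkeeping of sign-flipped fixed points.
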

The proof is based on a characterization of solutions to the fixed point equation for the associated Hammerstein integral operator~\eqref{Ham_k} as given in Proposition~\ref{Prop1} below. In case of the model at hand, then the analysis can be reduced to finding the fixed points of the following $2$-dimensional operator $V_k:(x,y) \in \R^2 \rightarrow
(x', y') \in \R^2$
\begin{equation}\label{V_k}
\begin{split}
V_{k,n}(x,y)=\left\{
\begin{array}{lllllll}
x'=\sum \limits_{j=0}^{\lfloor\frac{k}{2}\rfloor} \binom{k}{2j}
\frac{2n+1}{2n+1+2j} \cdot 2^{\frac{2j}{2n+1}} \cdot x^{k-2j} (\theta y)^{2j}$$
\\ [6 mm]$$y'=\sum \limits_{j=0}^{\lfloor\frac{k}{2}\rfloor} \binom{k}{2j+1}
\frac{2n+1}{2n+2+2j+1} \cdot 2^{\frac{2j}{2n+1}} \cdot x^{k-(2j+1)} (\theta y)^{2j+1}
\end{array}\right.
\end{split}
\end{equation}
with $k \geq 2$, which is then the content of Proposition~\ref{Prop2}.

\begin{pro}\label{Prop1} A function $\varphi \in C[0,1]$ is a solution of the Hammerstein equation
\begin{equation}\label{H_k}
\begin{split}
H_kf=f\end{split}
\end{equation}
with $H_k$ defined in~\eqref{Ham_k} for our model~\eqref{Model}, iff $\varphi$ has the following form
$$\varphi(t)=C_1 + C_2
\theta \sqrt[2n+1]{4(t-\frac{1}{2})},$$
where $(C_1, C_2) \in \R^2$ is
a fixed point of the operator $V_{k,n}$ as defined in~\eqref{V_k}.
\end{pro}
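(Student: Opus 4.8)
The plan is to exploit that the kernel $K(t,u)=\exp(\xi_{t,u})$ of~\eqref{Ham_k} is \emph{degenerate}, i.e.\ of finite rank. Writing $g(t)=\sqrt[2n+1]{4(t-\tfrac12)}$, a well-defined real-valued continuous function on $[0,1]$ since $2n+1$ is odd, the potential~\eqref{Model} yields the factorization
\[
K(t,u)=1+\theta\sqrt[2n+1]{4(t-\tfrac12)(u-\tfrac12)}=1+\theta\,4^{-\frac{1}{2n+1}}g(t)g(u).
\]
Hence for \emph{every} $f\in C^{+}[0,1]$ the image $H_kf$ already lies in the two-dimensional subspace $\spa\{1,g\}\subset C[0,1]$, because
\[
(H_kf)(t)=\int_0^1 f^{k}(u)\,du+\theta\,4^{-\frac{1}{2n+1}}g(t)\int_0^1 g(u)f^{k}(u)\,du.
\]
In particular, any solution $\varphi$ of $H_k\varphi=\varphi$ must itself belong to $\spa\{1,g\}$, which is exactly the asserted form $\varphi(t)=C_1+C_2\theta g(t)$ for some $(C_1,C_2)\in\R^2$. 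This settles the ``only if'' implication and simultaneously identifies, by comparing the coefficients of $1$ and of $g(t)$,
\[
C_1=\int_0^1\varphi^{k}(u)\,du,\qquad C_2=4^{-\frac{1}{2n+1}}\int_0^1 g(u)\varphi^{k}(u)\,du.
\]

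It then remains to turn these two identities into the equations defining $V_{k,n}$. Expanding $\varphi^{k}(u)=\sum_{i=0}^{k}\binom{k}{i}C_1^{k-i}(\theta C_2)^{i}g(u)^{i}$ by the binomial theorem reduces everything to the moments $M_i=\int_0^1 g(u)^{i}\,du$. Substituting $v=u-\tfrac12$ gives $M_i=4^{\frac{i}{2n+1}}\int_{-1/2}^{1/2}v^{i/(2n+1)}\,dv$, and the decisive observation is the parity of the integrand about $v=0$: for odd $i$ the integral vanishes, while for even $i$ an elementary computation yields $M_i=\frac{2n+1}{2n+1+i}\,2^{\frac{i}{2n+1}}$.

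Inserting these moments, the $C_1$-identity keeps only the even indices $i=2j$ and becomes verbatim the first line of~\eqref{V_k}, while the $C_2$-identity, owing to the additional factor $g(u)$, keeps only the odd indices $i=2j+1$ and becomes the second line once the powers of $2$ collapse via $4^{-1/(2n+1)}\,2^{(2j+2)/(2n+1)}=2^{2j/(2n+1)}$. Thus $(C_1,C_2)$ is a fixed point of $V_{k,n}$, and since each step above is reversible, conversely every fixed point of $V_{k,n}$ produces a solution of $H_k\varphi=\varphi$ of the stated form, giving the ``if'' direction. I expect the only delicate part to be the bookkeeping of the constants $4^{\pm 1/(2n+1)}$ and the powers $2^{\,\cdot/(2n+1)}$, so that they collapse precisely to the coefficients $\frac{2n+1}{2n+1+2j}2^{2j/(2n+1)}$; the conceptual input—degeneracy of the kernel together with the parity of $M_i$ which splits the system into the even ($x'$) and odd ($y'$) equations—is immediate.
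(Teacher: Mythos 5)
Your proof is correct and follows essentially the same route as the paper: both rest on the rank-two (degenerate) structure of the kernel to force $\varphi\in\spa\{1,g\}$, then use the binomial expansion and the parity of the moments $\int_0^1(u-\tfrac12)^{i/(2n+1)}du$ to turn the coefficient identities into the two lines of $V_{k,n}$, with the converse being the same computation run backwards. The only difference is cosmetic: you make the degeneracy of $K$ explicit, whereas the paper states the resulting form of $\varphi$ and the integral formulas for $C_1,C_2$ without comment.
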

%
%
%
%
In the following proposition we characterize the fixed points of $V_{k,n}$ which readily implies Theorem~\ref{t} using Proposition~\ref{Prop1}. 
\begin{pro}\label{Prop2}
Let $\theta_{\rm c}=(2n+3)/(k(2n+1))$, then there exist $x_o,y_o\in(0,\infty)$ such that the number and form of the fixed points of the operator $V_{k,n}$ are as presented in the following Table~\ref{Table1}.
\begin{table}[h]
  \caption{Set of $2$-dimensional fixed points of $V_{k,n}$}
  \begin{tabular}{cV{3} c | c | c  | c | c | c | c | r |}
    & \multicolumn{3}{c|}{fixed points if $ 0 \leq \theta \leq \theta_{\rm c}$}&\multicolumn{4}{c|}{additional fixed points if $\theta_{\rm c}< \theta <1$}\\ 
    \hlineB{3}
    $k$ even & $(0,0)$ & $(1,0)$ & &$(x_o,y_o)$ && $(x_o,-y_o)$ &\\ \hline
    $k$ odd & $(0,0)$ & $(1,0)$ & $(-1,0)$& $(x_o,y_o)$ & $(-x_o,-y_o)$ & $(x_o,-y_o)$& $(-x_o,y_o)$\\ \hline
  \end{tabular}
  \label{Table1}
\end{table}

\noindent
Only the fixed points $(1,0)$, $(x_o,y_o)$ and $(x_o,-y_o)$ give rise to positive solutions for the Hammerstein equation~\eqref{H_k}.
\end{pro}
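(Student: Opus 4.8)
The plan is to determine all real fixed points of $V_{k,n}$ by reducing the two--dimensional system $V_{k,n}(x,y)=(x,y)$ to a single scalar equation and then reading off the sign pattern from the symmetries built into the map. First I would record these symmetries: since $x'$ is even and $y'$ is odd in $y$, the reflection $(x,y)\mapsto(x,-y)$ permutes the fixed-point set, and for $k$ odd $x'$ is odd and $y'$ even in $x$, so $(x,y)\mapsto(-x,-y)$ and $(x,y)\mapsto(-x,y)$ do as well. Hence it suffices to find the fixed points with $x\ge 0$, $y\ge 0$ and to generate the remaining entries of Table~\ref{Table1} by reflection. The fixed points on the axis $\{y=0\}$ follow by direct substitution: $y'(x,0)=0$ automatically and $x'(x,0)=x^{k}$, so $x^{k}=x$ forces $x\in\{0,1\}$, together with $x=-1$ when $k$ is odd; this yields $(0,0)$, $(1,0)$ and, for $k$ odd, $(-1,0)$. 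The axis $\{x=0\}$ is handled the same way, and the positivity discussion below shows that any such points are irrelevant to the count of positive solutions.

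For fixed points with $x\ne 0$ and $y\ne 0$ I would set $s=\theta y/x$ and $w=s^{2}$ and introduce the two polynomials
\[
\widehat A(w)=\sum_{j}\binom{k}{2j}\frac{2n+1}{2n+1+2j}\,2^{2j/(2n+1)}w^{j},\qquad
\widetilde B(w)=\sum_{j}\binom{k}{2j+1}\frac{2n+1}{2n+3+2j}\,2^{2j/(2n+1)}w^{j}.
\]
A short computation turns $x'=x$ into $x^{k-1}\widehat A(w)=1$ and $y'=y$ into $x^{k-1}\widetilde B(w)=1/\theta$; dividing eliminates $x^{k-1}$ and shows the system is equivalent to the single scalar equation $R(w):=\widetilde B(w)/\widehat A(w)=1/\theta$ on $w>0$, together with $x^{k-1}=1/\widehat A(w)>0$ and $y=sx/\theta$. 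As $k-1$ is odd for $k$ even and even for $k$ odd, solving $x^{k-1}=1/\widehat A(w)$ gives the single value $x_o$ (resp. the pair $\pm x_o$), and each sign $s=\pm\sqrt w$ produces exactly the off-axis entries of the table. Thus everything reduces to counting positive roots of $R(w)=1/\theta$.

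The key analytic step, and the one I expect to be the main obstacle, is to prove that $R$ is strictly decreasing on $[0,\infty)$. I would first show that the term-wise ratios $r_{j}=\tfrac{k-2j}{2j+1}\cdot\tfrac{2n+2j+1}{2n+2j+3}$ are strictly decreasing in $j$; writing $b=k-2j$, $c=2j+1$ so that $b+c=k+1$, this reduces to the elementary inequality $(2n+2j+3)^{2}(k+1)>2(k-2j)(2j+3)$, which follows from $2n+2j+3\ge 2j+3$ together with $(2j+3)(k+1)\ge 3(k+1)>2k\ge 2(k-2j)$. Monotonicity of the $r_{j}$ then forces $R'<0$ via the standard pairing identity for $\widehat A\widetilde B'-\widetilde B\widehat A'$. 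Evaluating the endpoints gives $R(0)=k(2n+1)/(2n+3)=1/\theta_{\rm c}$, while $R(\infty)=0$ for $k$ even and $R(\infty)=\tfrac1k\cdot\tfrac{2n+k}{2n+k+2}$ for $k$ odd, both strictly below $1$. Consequently, for $\theta\le\theta_{\rm c}$ one has $1/\theta\ge 1/\theta_{\rm c}=R(0)$, so the strictly decreasing $R$ never attains the level $1/\theta$ on $(0,\infty)$ and only the axis points remain; for $\theta_{\rm c}<\theta<1$ one has $R(\infty)<1<1/\theta<1/\theta_{\rm c}=R(0)$, so $R(w)=1/\theta$ has exactly one root $w_{0}\in(0,\infty)$, which by the sign analysis yields precisely the additional fixed points of Table~\ref{Table1}.

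Finally, for the positivity claim I would translate $\varphi\ge 0$ into a bound on $w_{0}$. With $\psi(t)=(2t-1)^{1/(2n+1)}\in[-1,1]$ the candidate is $\varphi(t)=x_o+y_o\theta\,2^{1/(2n+1)}\psi(t)$, whose minimum over $[0,1]$ equals $x_o\big(1-\sqrt{w_{0}}\,2^{1/(2n+1)}\big)$; hence $\varphi>0$ precisely when $w_{0}<4^{-1/(2n+1)}$, whereas the reflected fixed points with $x=-x_o<0$ or $x=0$ give sign-changing $\varphi$. Since $R$ is decreasing it suffices to verify $R(4^{-1/(2n+1)})<1$, and at $w^{*}=4^{-1/(2n+1)}$ the powers of $2$ cancel, so that
\[
\widehat A(w^{*})-\widetilde B(w^{*})=\frac12\int_{0}^{1}(1-\psi^{2})\big[(1+\psi)^{k-1}+(1-\psi)^{k-1}\big]\,du>0,
\]
because $\psi\in[-1,1]$ makes the integrand nonnegative and positive on a set of positive measure. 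This gives $w_{0}<4^{-1/(2n+1)}$ and hence $\varphi>0$, so exactly $(1,0)$, $(x_o,y_o)$ and $(x_o,-y_o)$ give positive solutions of~\eqref{H_k}, as claimed.
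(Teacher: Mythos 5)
Your proposal is correct in its main line, but it takes a genuinely different route from the paper, so a comparison is worthwhile. The paper keeps the odd variable $z=\theta y/x$ and reduces the off-axis fixed-point condition to the vanishing of a polynomial $P(z)$ with only odd-degree terms, whose coefficients $r_\theta(k,i)$ lose positivity one at a time as $\theta$ increases; existence and uniqueness of the positive root then follow from Descartes' rule of signs together with the monotonicity of the critical values $\theta_{k,i}$ in $i$. You instead pass to $w=z^2$ and show that $R(w)=\widetilde B(w)/\widehat A(w)$ is strictly decreasing from $R(0)=k(2n+1)/(2n+3)=1/\theta_{\rm c}$ down to a limit below $1$, so the threshold appears transparently as $R(0)$ and uniqueness of $w_0$ is immediate. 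Your inequality $(k+1)(2n+2j+3)^2>2(k-2j)(2j+3)$ is indeed exactly what $r_j>r_{j+1}$ reduces to (the numerator of $r_j-r_{j+1}$ over the positive common denominator is $2(k+1)(N+3)^2-4(k-2j)(2j+3)$ with $N=2n+2j$), and your chain $2n+2j+3\ge 2j+3$, $(2j+3)(k+1)>2k\ge 2(k-2j)$ closes it. For positivity, you verify $w_0<4^{-1/(2n+1)}$ via the cancellation of the powers of $2$ at $w^*=4^{-1/(2n+1)}$ and the moment identity $\widehat A(w^*)-\widetilde B(w^*)=\int_0^1(1-\psi)(1+\psi)^k\,du>0$ with $\psi(u)=(2u-1)^{1/(2n+1)}$; the paper instead bounds $P(2^{-1/(2n+1)})$ below by the alternating sum $\sum_i(-1)^i\binom{k}{i}\frac{1}{m+1+i}=\frac{k!\,m!}{(m+1+k)!}$. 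Both are valid; yours is more structural.

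There is one point you wave away that you should not: the axis $\{x=0\}$. You assert it is ``handled the same way'' and irrelevant. If you actually carry out the computation for odd $k$, you find that $x'$ vanishes identically at $x=0$ (every exponent $k-2j$ is at least $1$), while $y'$ retains the $j=(k-1)/2$ term, so $y=\frac{2n+1}{2n+2+k}\,2^{(k-1)/(2n+1)}(\theta y)^k$ has, since $k-1$ is even, the two nonzero solutions $y=\pm y_*$ with $y_*^{k-1}=\big[\tfrac{2n+1}{2n+2+k}2^{(k-1)/(2n+1)}\theta^k\big]^{-1}$. For example, for $k=3$, $n=0$ one has $y'=x^2\theta y+\tfrac45(\theta y)^3$, giving the fixed points $(0,\pm\sqrt{5/(4\theta^3)})$ for every $\theta\in(0,1)$. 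These points are absent from Table~\ref{Table1}, so your argument does not establish the enumeration claimed in the proposition --- and in fact cannot, since the table is incorrect as stated for odd $k\ge 3$. To be fair, the paper's own proof has the identical blind spot: it only treats $y\neq 0$ via the substitution $z=\theta y/x$, which tacitly assumes $x\neq 0$, and never returns to the excluded case (for even $k$ there is no issue, because $x=0$ forces the $(\theta y)^k$ term in $x'$ to vanish and hence $y=0$). Your closing observation is nevertheless the right one: the extra fixed points give $\varphi(t)=\pm y_*\theta\sqrt[2n+1]{4(t-1/2)}$, which changes sign on $[0,1]$, so they contribute no positive solutions of \eqref{H_k} and Theorem~\ref{t} is unaffected. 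You should make the $x=0$ computation explicit and flag the discrepancy with the table rather than asserting that the table holds.
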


Let us finally give the references to the special cases considered prior to this work.  \cite[Theorem 4.2 and Theorem 5.2]{erb} proves the cases $k=2,3$ with $n=1$ of~\eqref{Model} whereas 
in \cite[Theorem 3.2.]{bb} the cases $k\geq 2$ with $n=1$ are given. 
Finally, in \cite[Theorem 2.3]{b1} the cases $k=2$ with general $n \geq 1$ is provided. 
%
%

\section{Proofs}
Note that for the model~\eqref{Model} with $\beta=1$, the kernel $K(t,u)$ of the Hammerstein operator $H_k$ is given by
$$K(t,u)=1+\theta \sqrt[2n+1]{4(t-\frac{1}{2})(u-\frac{1}{2})}.$$

\begin{proof}[Proof of Proposition~\ref{Prop1}] 
Let us start with necessity. Assume $\varphi \in C[0,1]$ to be a solution of the equation~\eqref{H_k}. Then we have
\begin{equation}\label{fi2} \varphi(t)=C_1 + C_2
\theta \sqrt[2n+1]{4(t-\frac{1}{2})},
\end{equation}
where
\begin{equation}\label{c21} 
C_1=\int \limits_0^1 \varphi^k(u)du \quad\text{ and }\quad C_2=\int \limits_0^1 \sqrt[2n+1]{u-\frac{1}{2}}\cdot \varphi^k(u)du.
\end{equation}
Substituting $\varphi(t)$ into the first equation of~\eqref{c21} we get
\begin{equation*}
\begin{split}
C_1&=
\int \limits_0^1 \left(C_1+ C_2 \theta\sqrt[2n+1]{4 (u-\frac{1}{2})}\right)^k du\\
&=\int \limits_0^1 \sum \limits_{i=0}^k \binom{k}{i} C_1^{k-i} \left( C_2 \theta\sqrt[2n+1]{4} \sqrt[2n+1]{u-\frac{1}{2}}\right)^i  du\\
&= \sum \limits_{i=0}^k \binom{k}{i}C_1^{k-i}(\theta C_2 )^i2^{\frac{2i}{2n+1}}\int \limits_0^1 \left(u-\frac{1}{2}\right)^{\frac{i}{2n+1}}  du.
\end{split}
\end{equation*}
Now, we use the following equality
\begin{equation}\label{1}\int \limits_0^1 \left(u-\frac{1}{2}\right)^{\frac{i}{2n+1}}  du =\left\{
                                                  \begin{array}{ll}
                                                    0, & \hbox{if $i$ is odd and} \\
                                                    \frac{2n+1}{2n+1+i} \cdot 2^{- \frac{i}{2n+1}}, & \hbox{if $i$ is even.}
                                                  \end{array}
                                                \right.
                                     \end{equation}
Then we get
                                   $$ C_1=\sum \limits_{j=0}^{\lfloor\frac{k}{2}\rfloor} \binom{k}{2j}
\frac{2n+1}{2n+1+2j}2^{\frac{2j}{2n+1}}C_1^{k-2j} (\theta C_2)^{2j}$$
and substituting the function $\varphi$ into the second equation of~\eqref{c21} we have
\begin{equation*}
\begin{split}
C_2&=\int \limits_0^1 \left(u-\frac{1}{2}\right)^{\frac{1}{2n+1}}\left(C_1+\theta C_2 \sqrt[2n+1]{4 (u-\frac{1}{2})}\right)^k du\cr
&=\int \limits_0^1 \left(u-\frac{1}{2}\right)^{\frac{1}{2n+1}} \sum \limits_{i=0}^k \binom{k}{i}C_1^{k-i} \left(\theta C_2 \sqrt[2n+1]{4} \sqrt[2n+1]{u-\frac{1}{2}}\right)^i  du\cr
&= \sum \limits_{i=0}^k \binom{k}{i}C_1^{k-i}(C_2 \theta)^i2^{\frac{2i}{2n+1}}\int \limits_0^1 \left(u-\frac{1}{2}\right)^{\frac{i+1}{2n+1}}  du.
\end{split}
\end{equation*}
Now, using the following equality
\begin{equation}\label{2}\int \limits_0^1 \left(u-\frac{1}{2}\right)^{\frac{i+1}{2n+1}}  du =\left\{
                                                  \begin{array}{ll}
                                                    0, & \hbox{if $i$ is even and } \\
                                                   \frac{2n+1}{2n+2+i} \cdot 2^{- \frac{i+1}{2n+1}}, & \hbox{if $i$ is odd}
                                                  \end{array}
                                                \right.
                                    \end{equation}
we arrive at the equation
                                   $$ C_2= \sum \limits_{j=0}^{\lfloor\frac{k}{2}\rfloor} \binom{k}{2j+1}
\frac{2n+1}{2n+2+2j+1}2^{\frac{2j}{2n+1}}C_1^{k-2j-1} (\theta C_2)^{2j+1}.$$
In particular, the point $(C_1,C_2) \in \R^2$ must be a fixed point of the operator $V_{k,n}$ from~\eqref{V_k}.

\medskip
For the sufficiency, assume that, a point $(C_1,C_2) \in \R^2$ is a fixed point of the operator $V_{k,n}$ and define the function $\varphi \in C[0,1]$ by the equality $$\varphi(t)=C_1 + C_2
\theta \sqrt[2n+1]{4(t-\frac{1}{2})}.$$
Then, we can calculate
\begin{equation}\label{k}
\begin{split}
&(H_k \varphi)(t)=\int \limits_0^1 \left(1+\sqrt[2n+1]{4} \theta \sqrt[2n+1]{(t-\frac{1}{2})(u-\frac{1}{2})}\right)\varphi^k(u)du\cr
&=\int \limits_0^1 \varphi^k(u)du+\sqrt[2n+1]{4} \theta \sqrt[2n+1]{t-\frac{1}{2}} \int \limits_0^1 \sqrt[2n+1]{u-\frac{1}{2}}\varphi^k(u) du\cr
&=\int \limits_0^1 \left(C_1 + C_2\theta \sqrt[2n+1]{4(u-\frac{1}{2})}\right)^k du\cr
&\hspace{1cm}+ \sqrt[2n+1]{4} \theta \sqrt[2n+1]{t-\frac{1}{2}} \int \limits_0^1 \sqrt[2n+1]{u-\frac{1}{2}} \left(C_1+ C_2 \theta \sqrt[2n+1]{4(u-\frac{1}{2})}\right)^k du\cr
&=\sum \limits_{i=0}^k \binom{k}{i}C_1^{k-i}(\theta C_2 )^i2^{\frac{2i}{2n+1}}\int \limits_0^1 \left(u-\frac{1}{2}\right)^{\frac{i}{2n+1}}  du+ \sqrt[2n+1]{4} \theta \sqrt[2n+1]{t-\frac{1}{2}} \cr 
 &\hspace{1cm}\times \sum \limits_{i=0}^k \binom{k}{i}C_1^{k-i}(C_2 \theta)^i 2^{\frac{2i}{2n+1}}\int \limits_0^1 \left(u-\frac{1}{2}\right)^{\frac{i+1}{2n+1}}  du.
\end{split}
\end{equation}
Now, we using~\eqref{1} and~\eqref{2}, from~ \eqref{k} we get
\begin{equation*}
\begin{split}
(H_k \varphi)(t)&=\sum \limits_{j=0}^{\lfloor\frac{k}{2}\rfloor} \binom{k}{2j}
\frac{2n+1}{2n+1+2j}2^{\frac{2j}{2n+1}}x^{k-2j} (\theta y)^{2j} \cr
&\hspace{0.5cm}+ \theta \sqrt[2n+1]{4(t-\frac{1}{2})} \sum \limits_{j=0}^{\lfloor\frac{k}{2}\rfloor} \binom{k}{2j+1}
\frac{2n+1}{2n+2+2j+1}2^{\frac{2j}{2n+1}} x^{k-2j-1} (\theta y)^{2j+1}\cr
&=C_1+C_2 \theta \sqrt[2n+1]{4(t-\frac{1}{2})}=\varphi(t).
\end{split}
\end{equation*}
Thus, $\varphi$ is a solution of the equation (\ref{H_k}).
\end{proof}

\begin{proof}[Proof of Proposition~\ref{Prop2}]
Let us start by assuming $k$ to be even. We determine the number and form of solutions to $V_{k,n}$ in equation~\eqref{V_k}.
If $\theta\ge 0$, then for $k$ even $(0,0)$ and $(1,0)$ are fixed points.
If $\theta>0$ then there are potentially more fixed points. Indeed, let $\theta>0$ and assume $y>0$ then, writing $z=\theta y/x$, the fixed point equation for~\eqref{V_k} becomes
\begin{equation*}\label{V_z}
\begin{split}
z=\theta\frac{\sum_{i=1,3, \dots, k-1} \binom{k}{i}\frac{2n+1}{2n+2+i} \cdot 2^{\frac{i-1}{2n+1}} \cdot z^{i}}{\sum_{i=0,2, \dots, k}\binom{k}{i}\frac{2n+1}{2n+1+i}2^{\frac{i}{2n+1}}z^{i}}=\theta\frac{F_1(z)}{F_2(z)}=f(z).
\end{split}
\end{equation*}
Hence, in order to find solutions, we have to find roots of the polynomial
\begin{equation}\label{Poly}
\begin{split}
P(z)&=\sum_{i=1,3, \dots, k+1}\binom{k}{i-1}\frac{2n+1}{2n+i}2^{\frac{i-1}{2n+1}}z^{i}-\theta\sum_{i=1,3, \dots, k-1}\binom{k}{i}\frac{2n+1}{2n+2+i}2^{\frac{i-1}{2n+1}}z^{i}\\
&=r_\theta(k,k+1)z^{k+1}+\sum_{i=1,3, \dots, k-1}r_\theta(k,i)z^{i}
\end{split}
\end{equation}
where $r_\theta(k,k+1)=\frac{2n+1}{2n+k+1}2^{\frac{k}{2n+1}}$ and
\begin{equation*}\label{V_t}
\begin{split}
r_\theta(k,i)&=\binom{k}{i}\frac{2n+1}{2n+2+i}2^{\frac{i-1}{2n+1}}\Big[\frac{i}{k-i+1}\frac{2n+2+i}{2n+i}-\theta\Big].
\end{split}
\end{equation*}
Moreover,
\begin{equation*}
\begin{split}
r_\theta(k,i)\left\{
\begin{array}{lllllll}
<0\qquad \text{if}\quad \theta>\frac{i}{k-i+1}\frac{2n+2+i}{2n+i}\\
=0\qquad \text{if}\quad \theta=\frac{i}{k-i+1}\frac{2n+2+i}{2n+i}\\
>0\qquad \text{if}\quad \theta<\frac{i}{k-i+1}\frac{2n+2+i}{2n+i}
\end{array}\right.
\end{split}
\end{equation*}
and we denote the critical $\theta$ by $\theta_{k,i}$. Further note that $i\mapsto\theta_{k,i}$ is increasing. Indeed, the derivative of the continuous version is given by
\begin{equation*}
\begin{split}
\frac{4(1 + k) n^2 + (3 + k) i^2 +  4 (1 + k) n (1 + i))}{(1 + k - i)^2 (2 n + i)^2}
\end{split}
\end{equation*}
which is non-negative.
Hence, for $\theta$ below the lowest critical value, $\theta_{k,1}=\frac{2n+3}{k(2n+1)}$, all coefficients are positive and hence there is no positive real root by Descartes' rule of sign.
Further, again by Descartes' rule of sign, if we increase $\theta>\theta_{k,1}$, then there is exactly one sign change and hence, exactly one non-trivial positive real root which we denote $z_0$.

Since only odd term appear in the polynomial, with $z_0$ also $-z_0$ is a root.
In order to recover a solution $(x,y)$ from the positive non-trivial solution $z_0$, note that
\begin{equation*}\label{V_kn}
\begin{split}
V_{k,n}(x,y)=\left\{
\begin{array}{lllllll}
x=x^k\sum \limits_{i=0,2, \dots, k} \binom{k}{i}
\frac{2n+1}{2n+1+i} \cdot 2^{\frac{i}{2n+1}} \cdot (\frac{\theta y}{x})^i=x^kF_2(\frac{\theta y}{x})$$
\\ [6 mm]$$y=x^k\sum \limits_{i=1,3, \dots, k-1}\binom{k}{i}
\frac{2n+1}{2n+2+i} \cdot 2^{\frac{i-1}{2n+1}} (\frac{\theta y}{x})^i=x^kF_1(\frac{\theta y}{x})
\end{array}
\right.
\end{split}
\end{equation*}
and hence $x_0=F_2(z_0)^{1/(1-k)}>0$ and $y_0=F_1(z_0)F_2(z_0)^{k/(1-k)}>0$ solve the $2$-dimensional equation. Note that $(x_0,y_0)$ is the only solution with $\theta y_0/x_0=z_0$. Indeed, any other such solution would be $x_1=c x_0$ and $y_1=cy_0$ for some $c\in\R\setminus\{0\}$, but plugging this into the first line of the above equation gives $c=c^k$ which is true iff $c=1$ for even $k$.

Further, note that $F_2(-z_0)^{1/(1-k)}=F_2(z_0)^{1/(1-k)}=x_0$ and $F_1(-z_0)F_2(-z_0)^{k/(1-k)}=-F_1(z_0)F_2(z_0)^{k/(1-k)}=-y_0$ and hence also $(x_0,-y_0)$ is a solution to the $2$-dimensional fixed point equation. Using similar arguments one can show that this is the only fixed point with $-\theta y_0/x_0=-z_0$.

\medskip
For odd $k$ and $\theta\ge 0$ we have fixed points $(0,0)$, $(1,0)$ and $(-1,0)$.
For the additional fixed point, the calculations are analogous, but without the leading term $z^{k+1}$, yielding again to fixed point $z_0$ and $-z_0$ for $\theta>\theta_{k,1}$. In contrast to the case for even $k$, for odd $k$, both $(x_0,y_0)$ and $(-x_0,-y_0)$ are $2$-dimensional fixed points corresponding to $z_0$. Finally, the fixed points $(x_0,-y_0)$ and $(-x_0,y_0)$ corresponds to $-z_0$. The complete list of fixed points is recorded in Table~\ref{Table1}.

\medskip
For $(\pm x_0,\pm y_0)$ to give rise to a positive solution, by the form of solutions $\varphi$ we must have that for all $t\in[0,1]$
\begin{equation*}
\pm x_o\pm y_o\theta\sqrt[2n+1]{4(t-1/2)}>0.
\end{equation*}
Clearly, for $-x_o$, in $t=1/2$, the inequality is violated and it suffices to consider the points  $(x_0,\pm y_0)$. By monotonicity in $t$, it suffices to show that
\begin{equation}\label{Criterium}
2^{-1/(2n+1)}>\theta y_o/x_o=z_o
\end{equation}
for the positive solution of the polynomial $P$ from~\eqref{Poly}. Since, the sign change in the polynomial must be from minus to plus, we need to determine its sign in $2^{-1/m}$ where we put $m=2n+1$. We show that indeed $P(2^{-1/m})>0$ which implies that~\eqref{Criterium} is satisfied and thus $(x_o,\pm y_o)$ correspond to positive solutions. 
Note that
\begin{equation*}
\begin{split}
P(2^{-1/m})&=2^{-1/m}\sum_{i=1,3, \dots, k+1}\binom{k}{i-1}\frac{m}{m+i-1}-\theta\sum_{i=1,3, \dots, k-1}\binom{k}{i}\frac{m}{m+1+i}>0
\end{split}
\end{equation*}
is implied by 
\begin{equation*}
\begin{split}
\sum_{i=1,3, \dots, k+1}\binom{k}{i-1}\frac{1}{m+i-1}-\sum_{i=1,3, \dots, k-1}\binom{k}{i}\frac{1}{m+1+i}>0.
\end{split}
\end{equation*}
We can further bound the left hand side from below by
\begin{equation*}
\begin{split}
\sum_{i=0}^k(-1)^i\binom{k}{i}\frac{1}{m+1+i}=\frac{k!m!}{(m+1+k)!}
\end{split}
\end{equation*}
which is positive for all $m, k$. This completes the proof. 
\end{proof}
%
%

\section{Acknowledgement}
Golibjon Botirov thanks the DAAD program for the financial support and the Weierstrass Institute Berlin for its hospitality. Benedikt Jahnel thanks the Leibniz program 'Probabilistic methods for mobile ad-hoc networks' for the support.

\end{document}